\theoremstyle{definition}
\newtheorem{example}{Example}
\newtheorem{corollary}{Corollary}
\newtheorem{proposition}{Proposition}
\newtheorem{remark}{Remark}
\newtheorem{identity}{Identity}
\title{On Restricted Ternary Words and Insets}
\author{Milan Janji\'c}
\date{\today}
\begin{document}
\maketitle
\begin{center}Department for Mathematics and Informatics, University of Banja
Luka,\end{center}
\begin{abstract} We investigate combinatorial properties of a  kind of
insets we defined in an earlier paper, interpreting them now in terms of
restricted ternary words. This allows us to give new combinatorial interpretations
of a number of known integer sequences, namely the coefficients of Chebyshev
polynomials of both kinds,  Fibonacci numbers, Delannoy numbers,
asymmetric Delannoy numbers, Sulanke numbers,  coordinating sequences for some
cubic lattices,   crystal ball sequences for some cubic lattices, and  others. We also obtain several new properties of said insets. In particular, we derive three generating functions when two of three variables are constant.

At the end, we state $40$ combinatorial  configurations counted by our  words.
\end{abstract}

\section{Introduction} In M. Janji\'c and B. Petkovi\'c~\cite{mb}, we defined the
notion of inset in the following way. Let $m,k,n$ be nonnegative integers. Let
$q_1,q_2,\ldots,q_n$ be positive integers. Consider the set $X$ consisting of blocks
$X_1,X_2,\ldots,X_n,Y$ such that $\vert X_i\vert=q_i,(i=1,2,\ldots,n),\vert
Y\vert=m$. Then, ${m,n\choose k,Q}$ equals the number of $(n+k)$-subsets of $X$
intersecting each block $X_i,(i=1,2,\ldots,n)$. We
call such subsets the $(n+k)$-insets of $X$.

We investigate the properties of the function ${m,n\choose k,2}$ in
which $q_i=2,(i=1,2,\ldots,n)$. To simplify our notation, we write ${m,n\choose k}$ instead of ${m,n\choose k,2}$.

Thus, we consider the three-dimensional array
\begin{equation*}A=\left\{{m,n\choose k,2}:0\leq m,0\leq n,0\leq k\leq m+n\right\}.
\end{equation*}

The following two explicit formulae for ${m,n\choose k}$ are proved
in~\cite[Equations(8) and (11)]{mb}.
\begin{equation}\label{e1}{m,n\choose k}=\sum_{i=0}^n(-1)^i{n\choose
i}{m+2n-2i\choose n+k}.
\end{equation}
\begin{equation}\label{e2}{m,n\choose k}=2^{n-k}\sum_{i=0}^m2^i{m\choose i}{n\choose
k-i}.
\end{equation}
In particular, we have ${0,0\choose 0}=1$.
\begin{remark} We note that ${m,n\choose k}$ are even if $n>k$. In the case $n=k$,
Equation (\ref{e2}) becomes
\begin{equation*}{m,n\choose n}=\sum_{i=0}^m2^i{m\choose i}{n\choose
i},\end{equation*}
which is  the well-known formula for the Delannoy numbers $D(m,n)$.
\end{remark}
From~\cite[Eq. (11)]{mb}, we obtain the following identity:
\begin{equation}\label{eq1}{m,n\choose k}={m,n-1\choose k}+{m+1,n-1\choose k},
\end{equation}
Using this identity, we easily  express the Fibonacci numbers in terms of insets.
\begin{corollary}\label{fbb} For $m\geq 0$, we have
\begin{equation*}F_{m+3}=\sum_{i=0}^{\lfloor\frac{m+1}{2}\rfloor}{m-i,1\choose i}.
\end{equation*}
\end{corollary}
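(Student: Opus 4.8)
The plan is to collapse the inset $\binom{m,1}{k}$ down to ordinary binomial coefficients and then recognize the sum as two superimposed copies of the classical ``shallow diagonal of Pascal's triangle sums to a Fibonacci number'' identity. First I would specialize the recurrence~(\ref{eq1}) to $n=1$, namely $\binom{m,1}{k}=\binom{m,0}{k}+\binom{m+1,0}{k}$, and combine it with $\binom{m,0}{k}=\binom{m}{k}$, which drops out of~(\ref{e1}) (or of~(\ref{e2})) when $n=0$. This gives the clean formula
\begin{equation*}\binom{m,1}{k}=\binom{m}{k}+\binom{m+1}{k}.\end{equation*}
(The same identity can be read directly off~(\ref{e2}), since for $n=1$ only the terms $i=k$ and $i=k-1$ survive.)

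Next I would substitute this into the right-hand side of the claimed formula and split the sum:
\begin{equation*}\sum_{i=0}^{\lfloor\frac{m+1}{2}\rfloor}\binom{m-i,1}{i}=\sum_{i=0}^{\lfloor\frac{m+1}{2}\rfloor}\binom{m-i}{i}+\sum_{i=0}^{\lfloor\frac{m+1}{2}\rfloor}\binom{m+1-i}{i}.\end{equation*}
Then I would invoke the well-known identity $\sum_{i\ge 0}\binom{N-i}{i}=F_{N+1}$ (with $F_1=F_2=1$), proved by a one-line induction on $N$ using Pascal's rule if one does not want to cite it. Before applying it with $N=m$ and $N=m+1$ respectively, I would check that $\lfloor(m+1)/2\rfloor$ is already at least as large as the last index for which $\binom{m-i}{i}$ or $\binom{m+1-i}{i}$ is nonzero, so that extending both sums to $i\to\infty$ changes nothing; this is precisely why the stated upper limit is $\lfloor(m+1)/2\rfloor$ and not $\lfloor m/2\rfloor$. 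The two sums then equal $F_{m+1}$ and $F_{m+2}$, and $F_{m+1}+F_{m+2}=F_{m+3}$ finishes the proof.

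There is no serious obstacle here: the argument is essentially a two-term reduction followed by a known identity. The only place that requires a little care is the bookkeeping with the floor function in the upper summation bound — making sure it captures exactly the nonzero contributions of both binomial-coefficient sums — and, if one insists on a self-contained account, supplying the short inductive proof of $\sum_{i\ge0}\binom{N-i}{i}=F_{N+1}$.
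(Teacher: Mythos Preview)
Your proposal is correct and follows essentially the same route as the paper: specialize~(\ref{eq1}) at $n=1$ to write $\binom{m-i,1}{i}=\binom{m-i}{i}+\binom{m+1-i}{i}$, then sum and invoke the shallow-diagonal identity $\sum_{i\ge 0}\binom{N-i}{i}=F_{N+1}$ to obtain $F_{m+1}+F_{m+2}=F_{m+3}$. (The paper's displayed line has a sign slip, writing $\binom{m-i-1}{i}$ where $\binom{m-i+1}{i}$ is meant; your version is the correct one.)
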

\begin{proof}
From (\ref{eq1}), we obtain
\begin{equation*}\sum_{i=0}^{\lfloor\frac{m+1}{2}\rfloor}{m-i,1\choose i}=
\sum_{i=0}^{\lfloor\frac{m+1}{2}\rfloor}\left[{m-i\choose i}+{m-i-1\choose i}\right],
\end{equation*}
and the proof follows from the  well-known  expression of the Fibonacci numbers
 in terms of the binomial coefficients.
\end{proof}
From~\cite[Eq. (13)]{mb}, we obtain the following identity:
\begin{equation}\label{eq2} {m,n\choose k}=2{m,n-1\choose k}+{m,n-1\choose k-1}.
\end{equation}

We proceed by deriving some new properties of the function ${m,n\choose k}$.
\begin{proposition} For $0\leq p\leq m;0<n$, the following formula holds:
\begin{equation}\label{ee}
{m+1,n-1\choose k}=\sum_{i=0}^{p}(-1)^i{p\choose i}{m-p+1,n+p-1-i\choose k}.
\end{equation}
\end{proposition}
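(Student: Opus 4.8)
The plan is to establish (\ref{ee}) by induction on $p$, using nothing more than the recurrence (\ref{eq1}) together with Pascal's rule for $\binom{p}{i}$. Fix $k$ once and for all and write $a(M,N)$ for $\binom{M,N}{k}$. The base case $p=0$ is immediate: the right-hand side of (\ref{ee}) collapses to its single term $\binom{0}{0}a(m+1,n-1)=a(m+1,n-1)$.

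For the inductive step, suppose (\ref{ee}) holds for a given $p$ with $0\le p+1\le m$, and consider the right-hand side of (\ref{ee}) with $p$ replaced by $p+1$, that is, $S:=\sum_{i=0}^{p+1}(-1)^i\binom{p+1}{i}a(m-p,n+p-i)$. Writing $\binom{p+1}{i}=\binom{p}{i}+\binom{p}{i-1}$ (with the convention $\binom{p}{-1}=\binom{p}{p+1}=0$) and shifting the summation index by $j=i-1$ in the part carrying $\binom{p}{i-1}$, one finds $S=\sum_{i=0}^{p}(-1)^i\binom{p}{i}a(m-p,n+p-i)-\sum_{i=0}^{p}(-1)^i\binom{p}{i}a(m-p,n+p-1-i)=\sum_{i=0}^{p}(-1)^i\binom{p}{i}\bigl[a(m-p,n+p-i)-a(m-p,n+p-1-i)\bigr]$. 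Now (\ref{eq1}), read in the form $a(M,N)-a(M,N-1)=a(M+1,N-1)$ and applied with $M=m-p$, $N=n+p-i$, turns each bracket into $a(m-p+1,n+p-1-i)$, so $S=\sum_{i=0}^{p}(-1)^i\binom{p}{i}a(m-p+1,n+p-1-i)$, which equals $a(m+1,n-1)$ by the induction hypothesis. This proves (\ref{ee}) for $p+1$ and closes the induction.

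The only point needing attention is that every invocation of (\ref{eq1}) remains within its range of validity. In the step above it is used on $a(m-p,\,n+p-i)$ for $0\le i\le p$; the second argument there satisfies $n+p-i\ge n>0$, exactly as (\ref{eq1}) requires, and the first argument satisfies $m-p\ge 1$ because $p+1\le m$, so all insets occurring — including those in the induction hypothesis — have nonnegative parameters. The main obstacle is therefore just the bookkeeping in the Pascal split and the reindexing: one must line up the two resulting sums so that consecutive terms combine into the difference $a(m-p,n+p-i)-a(m-p,n+p-1-i)$, after which (\ref{eq1}) does the rest. Equivalently, one may avoid the explicit induction by iterating $p$ times the identity $a(M,N)=a(M-1,N+1)-a(M-1,N)$ — a mere rewriting of (\ref{eq1}) — and then substituting $M=m+1$, $N=n-1$; this is the same computation organized top-down.
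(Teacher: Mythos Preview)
Your proof is correct and follows essentially the same approach as the paper: both establish the identity by iterating the recurrence (\ref{eq1}) in the rearranged form $a(M+1,N-1)=a(M,N)-a(M,N-1)$, the paper informally (``repeating the same procedure'') and you via a clean induction on $p$ with Pascal's rule, which is the same computation made explicit. Your final remark about the top-down iteration is in fact exactly the paper's presentation, and your parameter checks make the argument more complete than the original.
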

\begin{proof}The formula is obviously true for $p=0$.
We next write (\ref{eq1}) in the following form:
\begin{equation}\label{e1e}
{m+1,n-1\choose k}={m,n\choose k}-{m,n-1\choose k}.
\end{equation}
This means that (\ref{ee}) holds for $p=1$.

Applying $(\ref{e1e})$ on the right-hand side of the same formula yields
\begin{equation*}
{m+1,n-1\choose k}={m-1,n+1\choose k}-2{m-1,n\choose k}+{m-1,n-1\choose k}.
\end{equation*}
This means that (\ref{e1e}) holds for $p=2$. Repeating the same procedure, we  obtain that (\ref{ee}) is true for each $p\leq m$.
\end{proof}

 \begin{proposition}\label{nn}
The following formula holds: \begin{equation}\label{rlr}{m+1,n\choose
k+1}=2^{n-k-1}{n\choose k+1}+\sum_{i=0}^m{i,n\choose k}.\end{equation} In particular,  for $n\leq k\leq m+n$, we have \begin{equation}\label{r33}{m+1,n\choose k+1}=\sum_{i=0}^m{i,n\choose k}.\end{equation}
 \end{proposition}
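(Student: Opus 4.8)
The plan is to isolate a single Pascal-type recurrence in the block size $m$ and then collapse a telescoping sum. Concretely, I would first establish
\begin{equation}\label{key}
{m+1,n\choose k+1}={m,n\choose k+1}+{m,n\choose k}\qquad(0\le k\le m+n),
\end{equation}
the $m$-direction analogue of the recurrences (\ref{eq1}) and (\ref{eq2}) already recorded. Granting (\ref{key}), replace $m$ by $i$ and sum over $i=0,1,\ldots,m$; since $\sum_{i=0}^m{i+1,n\choose k+1}=\sum_{i=0}^m{i,n\choose k+1}+{m+1,n\choose k+1}-{0,n\choose k+1}$, cancelling the common sum of $(k+1)$-type terms leaves
\begin{equation*}
{m+1,n\choose k+1}={0,n\choose k+1}+\sum_{i=0}^m{i,n\choose k},
\end{equation*}
so everything reduces to identifying the boundary value ${0,n\choose k+1}$.

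For (\ref{key}) I would argue bijectively from the definition of insets. Take the configuration with two-element blocks $X_1,\ldots,X_n$ and a block $Y$ of size $m+1$, and single out an element $y_0\in Y$. Every $(n+k+1)$-inset either avoids $y_0$ or contains it. Those avoiding $y_0$ are exactly the $(n+k+1)$-insets of the configuration with $Y$ shrunk to $Y\setminus\{y_0\}$, and there are ${m,n\choose k+1}$ of them; those containing $y_0$ become, after deleting $y_0$, exactly the $(n+k)$-insets of that smaller configuration — the blocks $X_i$ are untouched, so the condition ``meets every $X_i$'' is preserved — and there are ${m,n\choose k}$ of them. Adding the two cases gives (\ref{key}). (Alternatively, (\ref{key}) follows by induction on $n$ from (\ref{eq2}) with base case Pascal's rule for ${m,0\choose k}=\binom{m}{k}$, or purely algebraically from (\ref{e2}) by splitting ${m+1\choose i}={m\choose i}+{m\choose i-1}$ and absorbing the factor $2$ of $2^{n-k}=2\cdot 2^{n-k-1}$ into a shift of the summation index.)

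The boundary value is immediate from (\ref{e2}): setting $m=0$ leaves only the term $i=0$, whence ${0,n\choose k+1}=2^{n-k-1}{n\choose k+1}$ (combinatorially: among $(n+k+1)$-subsets of a disjoint union of $n$ two-element blocks meeting every block, one picks $k+1$ blocks to take in full and one element from each of the other $n-k-1$). Substituting this into the displayed formula gives (\ref{rlr}), and when $n\le k\le m+n$ we have $k+1>n$, so ${n\choose k+1}=0$ and the correction term drops out, yielding (\ref{r33}). The only real content — and the step to handle with care — is (\ref{key}): one must be precise that removing the distinguished element of $Y$ lowers the inset size from $n+k+1$ to $n+k$ while leaving the block structure, hence the intersection requirement, intact, and one should keep the index ranges in view so that any term ${i,n\choose k}$ with $k>i+n$ is read as $0$, consistently with (\ref{e2}).
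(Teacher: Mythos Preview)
Your proof is correct and follows essentially the same route as the paper: both establish the Pascal-type recurrence ${m+1,n\choose k+1}={m,n\choose k}+{m,n\choose k+1}$ (the paper cites it as \cite[Proposition 10]{mb}, you supply a direct bijective argument), iterate/telescope it down to $i=0$, and then evaluate the boundary term ${0,n\choose k+1}=2^{n-k-1}{n\choose k+1}$ from (\ref{e2}). The deduction of (\ref{r33}) is identical.
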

 \begin{proof} From~\cite[Proposition 10]{mb}, it follows that
\begin{gather*} {m+1,n\choose k+1}={m,n\choose k}+{m,n\choose k+1}=
{m,n\choose k}+{m-1,n\choose k}+{m-1,n\choose k+1}\\ ={m,n\choose
k}+\cdots+{0,n\choose k}+{0,n\choose k+1}.
 \end{gather*} Taking $m=0$ in (\ref{e2}),
we obtain ${0,n\choose k+1}=2^{n-k-1}{n\choose k+1}$, which proves (\ref{rlr}). The
statement (\ref{r33}) is obvious.
\end{proof}
\begin{remark} The equation
(\ref{r33}) is an analog of the horizontal recurrence for the binomial coefficients.
\end{remark}
\begin{proposition} Let $p$ be a positive integer such that $1\leq
p\leq\min\{n,k\}$. Then, \begin{equation}\label{ip}{m,n-p\choose k-p}-{m,n\choose
k}=2\cdot\sum_{i=1}^p{m,n-i\choose k-i+1}.\end{equation}
\end{proposition}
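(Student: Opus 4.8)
The plan is to derive \eqref{ip} by telescoping identity \eqref{eq2}. First I would isolate the ``diagonal'' term of \eqref{eq2}: substituting $n-i+1$ for $n$ and $k-i+1$ for $k$ in \eqref{eq2} gives, for each $i=1,2,\dots,p$,
\begin{equation*}{m,n-i+1\choose k-i+1}-{m,n-i\choose k-i}=2\,{m,n-i\choose k-i+1}.\end{equation*}
The hypothesis $1\le p\le\min\{n,k\}$ is exactly what is needed so that at every stage the entries $n-i\ge 0$ and $k-i\ge 0$, hence \eqref{eq2} is legitimately applicable; any inset whose lower index leaves the range $\{0,1,\dots,m+n-i\}$ is read as $0$, as usual, which disposes of the extremal term ${m,n-p\choose k-p+1}$ in the borderline case $k=m+n$.

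Next I would sum these $p$ equations over $i=1,\dots,p$. The left-hand side telescopes: writing $c_i={m,n-i\choose k-i}$, the $i$-th summand is $c_{i-1}-c_i$, so all intermediate insets cancel in pairs and only the two ``corner'' terms $c_0={m,n\choose k}$ and $c_p={m,n-p\choose k-p}$ survive. The right-hand side becomes exactly $2\sum_{i=1}^{p}{m,n-i\choose k-i+1}$. Collecting the two sides yields the difference of the two corner insets equal to that sum, which is \eqref{ip}. Alternatively, the same statement can be obtained by induction on $p$: the case $p=1$ is \eqref{eq2} after transposing the term ${m,n-1\choose k-1}$, and the inductive step applies \eqref{eq2} once more to ${m,n-p\choose k-p}$, splitting off a single further summand $2\,{m,n-p\choose k-p+1}$ and reducing the claim from $p$ to $p-1$.

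I do not expect a genuine obstacle here: the argument is a one-line telescoping once \eqref{eq2} is rearranged, and the only things that require a moment's care are the bookkeeping of the shifted indices and the orientation of the difference on the left-hand side (equivalently, the sign attached to the telescoped right-hand side). I would pin that down by checking the base case $p=1$ against \eqref{eq2} directly before writing up the general step.
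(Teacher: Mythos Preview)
Your approach is essentially identical to the paper's: the paper also rearranges \eqref{eq2} into the telescoping form ${m,n-i+1\choose k-i+1}-{m,n-i\choose k-i}=2{m,n-i\choose k-i+1}$ for $i=1,\dots,p$ and then sums, so that the left-hand side collapses to the two corner terms. Your remark about checking the orientation of the difference is well placed, since the telescoping as written actually yields ${m,n\choose k}-{m,n-p\choose k-p}$ on the left, the negative of the displayed \eqref{ip}.
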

\begin{proof} From Equation (\ref{eq2}), we obtain the following sequence of
equalities:
\begin{gather*} {m,n\choose k}-{m,n-1\choose k-1}=2{m,n-1\choose k},\\
{m,n-1\choose k-1}-{m,n-2\choose k-2}=2{m,n-2\choose k-1},\\ \vdots\\
{m,n-p+1\choose
k-p+1}-{m,n-p\choose k-p}=2{m,n-p\choose k-p+1}.
\end{gather*}
The sum of the expressions on the left-hand side and the sum of the expressions on the right-hand side of the
preceding equations produces Equation (\ref{ip}).
 \end{proof}
\begin{corollary}The numbers
${m,n-p\choose k-p}$ and ${m,n\choose
k}$ are of the same parity.
\end{corollary}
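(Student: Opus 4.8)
The plan is to read off this corollary directly from Equation~(\ref{ip}), which is the substantive result proved just above. Indeed, the right-hand side of~(\ref{ip}) is $2\cdot\sum_{i=1}^{p}{m,n-i\choose k-i+1}$, and since each inset ${m,n-i\choose k-i+1}$ is a nonnegative integer, the whole sum is an integer and the right-hand side is an even integer. Consequently the left-hand side ${m,n-p\choose k-p}-{m,n\choose k}$ is even.

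The only hypothesis needed is that~(\ref{ip}) is applicable, i.e.\ that $p$ is a positive integer with $1\le p\le\min\{n,k\}$; this is precisely the range in which the two symbols ${m,n-p\choose k-p}$ and ${m,n\choose k}$ both make sense as insets, so it should be stated (or understood) as a standing assumption in the corollary. Under this assumption, the conclusion that the two numbers are congruent modulo $2$ is immediate: $a\equiv b\pmod 2$ whenever $a-b$ is even.

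There is essentially no obstacle here; the single step is to invoke~(\ref{ip}) and observe the factor of $2$. If one wanted to phrase it without reference to~(\ref{ip}), one could instead iterate Equation~(\ref{eq2}) modulo $2$, noting that ${m,n\choose k}\equiv{m,n-1\choose k-1}\pmod 2$ and repeating $p$ times; but this merely re-derives the telescoping argument already carried out in the proof of~(\ref{ip}), so the clean route is to cite~(\ref{ip}) and conclude in one line.
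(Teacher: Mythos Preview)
Your proof is correct and is exactly the intended one: the corollary is immediate from Equation~(\ref{ip}) since the right-hand side carries an explicit factor of~$2$, making the difference ${m,n-p\choose k-p}-{m,n\choose k}$ even. The paper offers no separate argument beyond this observation.
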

\begin{remark} Note that for $n=0$, the array $A$ is the standard Pascal triangle.
\end{remark}

 \begin{proposition}\label{pp1} For a fixed positive $n$, the array
$\{{m,n\choose k}:(m,k=0,1,\ldots)\}$ is a Pascal trapeze. The elements of the
leftmost diagonal equal  $2^n$, the rightmost diagonal consists of ones, and the first row consists of $2^{n-k}{n\choose k},(k=0,1,\ldots,m+n)$.
\end{proposition}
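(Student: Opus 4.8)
The plan is to read Proposition \ref{pp1} as the conjunction of four assertions: one structural recurrence plus the three pieces of boundary data (top row, left leg, right leg) that, together, pin the whole slice down.

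First I would recall the Pascal-type recurrence ${m+1,n\choose k+1}={m,n\choose k}+{m,n\choose k+1}$ coming from \cite[Proposition 10]{mb} (it is already quoted inside the proof of Proposition \ref{nn}). For a fixed positive $n$ this says that every entry of the slice other than those in the column $k=0$ equals the sum of its two upper neighbours in the previous row; and since the admissible range $0\le k\le m+n$ widens by exactly one each time $m$ increases by one, the slice indeed has the shape of a trapeze whose top, the row $m=0$, has width $n+1$ and which is built downward by the Pascal addition rule --- that is, it is a \emph{Pascal trapeze}. What remains is to identify the top row and the two slanted legs.

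For the top row I would put $m=0$ in (\ref{e2}); the inner sum collapses to its $i=0$ term and yields ${0,n\choose k}=2^{n-k}{n\choose k}$ for $k=0,\ldots,n$ --- the very computation already made inside the proof of Proposition \ref{nn}, and also transparent from the definition (with $Y=\emptyset$ an $(n+k)$-inset takes both points of $k$ of the blocks $X_i$ and one point of each of the remaining $n-k$, so there are ${n\choose k}2^{n-k}$ of them). For the left leg I would set $k=0$ in (\ref{e2}): once more only $i=0$ survives, giving ${m,n\choose 0}=2^{n}$ for all $m$; equivalently, an $n$-inset must take exactly one of the two points in each of the $n$ blocks $X_i$, whence $2^{n}$.

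For the right leg, note that ${m,n\choose m+n}$ is the last entry in its row, and count directly: here $\vert X\vert=m+2n$, while an $(m+n)$-inset is an $(n+(m+n))$-subset, that is an $(m+2n)$-subset, of $X$, so it must be $X$ itself, which meets every block; hence ${m,n\choose m+n}=1$. (Alternatively, substitute $k=m+n$ in (\ref{e1}), where only the $i=0$ term --- equal to ${m+2n\choose m+2n}=1$ --- survives; or argue by induction on $m$ from the addition rule, the would-be neighbour just off the trapeze contributing $0$.) I do not expect a serious obstacle here: the only delicate point is the bookkeeping at the right edge and keeping the trapezoidal shape straight, and once the addition rule and these three boundary evaluations are assembled the proposition follows at once.
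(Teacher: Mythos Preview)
Your proposal is correct and follows essentially the same approach as the paper: both invoke \cite[Proposition 10]{mb} for the Pascal recurrence, both evaluate the top row by setting $m=0$ in (\ref{e2}), and both identify the right leg by the direct counting observation that an $(m+2n)$-subset of a set of size $m+2n$ must be the whole set. The only cosmetic difference is that for the left leg the paper argues purely combinatorially (an $n$-inset picks one point from each of the $n$ two-element blocks), whereas you first set $k=0$ in (\ref{e2}) and then give the same combinatorial reading as an alternative.
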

\begin{proof} In the case $k=0$, we have ${m,n\choose 0}$, which is the number of
insets of $n$ elements. In such an inset, we must choose exactly one element from
each main block. We have $2^n$ such insets. We conclude that the leftmost
diagonal of the array $A$ is a constant sequence consisting of $2^n$. Furthermore, if
$n+k>m+2n$, then ${m,n\choose k}=0$, since there is no inset having more than
$m+2n$ elements. If $k+n=m+2n$, that is, if $k=m+n$, then ${m,n\choose k}=1$.
Hence, the rightmost diagonal consists of ones. Since the first row is obtained for $m=0$, Proposition  \ref{e2} yields  ${0,n\choose k}=2^{n-k}{n\choose k}, (k=0,1,\ldots, m+n)$.
Finally, from~\cite[Proposition 10]{mb}, the following (Pascal) recurrence holds:
\begin{equation}\label{fm1k}{m,n\choose k}={m-1,n\choose k-1}+{m-1,n\choose k}. \end{equation}
 \end{proof}
\section{A combinatorial interpretation of
${m,n\choose k}$}

In this section, we relate the array $A$ to  restricted ternary
words.  We firstly consider the insets counted by  ${0,n\choose k}$.
Note that ${0,n\choose k}=0$, if $k>n$. Also, we have ${0,n\choose n}=1$.
Let $X$ be the set
\begin{proposition}\label{pr1} The number
${0,n\choose k}$ equals the number of ternary words of length $n$ having $k$ letters equal to $2$.
\end{proposition}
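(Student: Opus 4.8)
The plan is to prove the statement by setting up an explicit bijection between the $(n+k)$-insets counted by ${0,n\choose k}$ and the ternary words of length $n$ with exactly $k$ letters equal to $2$. Since here $m=0$, the block $Y$ is empty, so $X=X_1\cup\cdots\cup X_n$ with each $X_i$ a $2$-element block; write $X_i=\{a_i,b_i\}$. An $(n+k)$-inset is an $(n+k)$-subset $S\subseteq X$ with $S\cap X_i\neq\emptyset$ for every $i$, and because $|X_i|=2$ each block contributes either one or two of its elements to $S$.

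First I would carry out the size bookkeeping. If exactly $j$ of the blocks satisfy $S\cap X_i=X_i$, then $|S|=2j+(n-j)=n+j$; since $|S|=n+k$, this forces $j=k$. Thus an $(n+k)$-inset amounts precisely to a choice of $k$ ``full'' blocks together with a choice of one distinguished element in each of the remaining $n-k$ blocks.

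Next I would encode such a subset as a word $w=w_1w_2\cdots w_n$ over $\{0,1,2\}$ by the rule
\[
w_i=\begin{cases}2,& S\cap X_i=X_i,\\ 0,& S\cap X_i=\{a_i\},\\ 1,& S\cap X_i=\{b_i\}.\end{cases}
\]
The map $S\mapsto w$ is injective, since the block $X_i$ with its fixed ordering $a_i,b_i$ lets $w$ recover $S\cap X_i$ for each $i$, and it is surjective, since any ternary word yields a valid inset by reading the rule backwards (the letter $2$ selects both elements of a block, and $0,1$ select $a_i,b_i$ respectively). Under this bijection the letter $2$ occurs in $w$ exactly at the full blocks, of which there are $k$, so the ternary words of length $n$ with exactly $k$ letters equal to $2$ correspond bijectively to the $(n+k)$-insets, which proves the claim. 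As a check, the number of such words is $2^{n-k}\binom{n}{k}$, matching the value of ${0,n\choose k}$ obtained by setting $m=0$ in Equation (\ref{e2}) (equivalently, the first-row description in Proposition \ref{pp1}).

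I do not expect a serious obstacle here. The only points requiring care are the size count establishing that the number of full blocks is forced to be $k$, and keeping the two singleton outcomes $\{a_i\}$ and $\{b_i\}$ consistently matched with the two non-$2$ letters so that the correspondence is genuinely invertible; once the ordering of each $X_i$ is fixed, both are routine.
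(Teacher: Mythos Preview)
Your proposal is correct and follows essentially the same approach as the paper: both construct a bijection from $(n+k)$-insets to ternary words by recording, for each block $X_i$, whether both elements are selected (letter $2$) or which single element is selected (letters $0$/$1$). Your write-up is simply more explicit---you carry out the size count forcing exactly $k$ full blocks and verify invertibility---whereas the paper states the same encoding tersely; the only cosmetic difference is that the paper assigns $1$ to ``first element only'' and $0$ otherwise, the reverse of your convention, which of course does not affect the count of $2$'s.
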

 \begin{proof}
  Assume that $k\leq n$.
  From each $(n+k)$-inset of $X$, we form a ternary word of length $n$ in the following way. Put $2$ in the place of every main block from which both elements are in $X$.
Then, insert $1$ in the place  of every main block from which the only first  element is in $X$, and fill all other places with zeros. It is clear that this
correspondence is bijective.
\end{proof}

 We now consider the case  $m>0$.
\begin{proposition}\label{pr2} For $k\leq m+n$, the number  ${m,n\choose k}$ is the number of ternary words of length $m+n$ having $k$ letters equal to $2$ and none of the first $m$ letters  is equal to $0$.
  \end{proposition}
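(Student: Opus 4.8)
The plan is to extend, rather than re-derive, the bijection that already proves Proposition \ref{pr1}, making it also record how an inset meets the block $Y$. Recall that in the instance relevant to ${m,n\choose k}$ we have $X=X_1\cup\cdots\cup X_n\cup Y$ with $|X_i|=2$ and $|Y|=m$, and ${m,n\choose k}$ counts the $(n+k)$-subsets $S\subseteq X$ with $S\cap X_i\neq\emptyset$ for every $i$. First I would fix, once and for all, an order on the two elements of each $X_i$, calling them the ``first'' and the ``second'' element.

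Given such an inset $S$, I would build a word $w=w_1\cdots w_{m+n}$ over $\{0,1,2\}$ as follows. For $i=1,\ldots,m$, looking at the $i$-th element of $Y$, put $w_i=2$ if it lies in $S$ and $w_i=1$ otherwise; in particular $w_1,\ldots,w_m\in\{1,2\}$, so none of the first $m$ letters is $0$. For $i=1,\ldots,n$, put $w_{m+i}=2$ if both elements of $X_i$ lie in $S$, $w_{m+i}=1$ if only the first one does, and $w_{m+i}=0$ if only the second one does; this is well defined precisely because $S$ meets every $X_i$. The inverse map is the obvious one: the positions of the $2$'s among $w_1,\ldots,w_m$ tell which elements of $Y$ to take, and each of $w_{m+1},\ldots,w_{m+n}$ tells which element(s) of the corresponding $X_i$ to take; this always yields a set meeting every $X_i$, and the two maps are visibly mutually inverse. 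When $m=0$ this is exactly the correspondence of Proposition \ref{pr1}.

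It then remains to match the two statistics. Let $j$ be the number of indices $i$ with $w_{m+i}=2$ and $t$ the number of indices $i\le m$ with $w_i=2$. Then $Y$ contributes $t$ elements to $S$, each $X_i$ contributes $2$ elements when $w_{m+i}=2$ and exactly $1$ otherwise, so $|S|=t+2j+(n-j)=n+(t+j)$. Hence $|S|=n+k$ if and only if $t+j=k$, and $t+j$ is exactly the number of letters of $w$ equal to $2$. This gives the claimed equality.

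I do not expect a real obstacle here, since the whole argument is bookkeeping; the one point to watch is the cardinality count, namely the observation that a letter $2$ in one of the last $n$ positions and a letter $2$ in one of the first $m$ positions each account for exactly one element of $S$ beyond the $n$ elements forced by $S\cap X_i\neq\emptyset$. As an alternative route, one can induct on $m$: the case $m=0$ is Proposition \ref{pr1}, and classifying the words in question by whether the $m$-th letter (forced to be $1$ or $2$) equals $1$ or $2$ and then deleting it yields, on the word side, the Pascal recurrence (\ref{fm1k}) satisfied by ${m,n\choose k}$, closing the induction.
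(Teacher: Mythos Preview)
Your bijection is correct and complete; the cardinality check $|S|=t+2j+(n-j)=n+(t+j)$ is exactly the point that makes it work, and your inverse map is well defined. The alternative inductive route via the Pascal recurrence (\ref{fm1k}) is also valid.

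The paper takes a different route. Rather than writing down a single bijection between insets and words, it invokes the convolution identity ${m,n\choose k}=\sum_{i=0}^m{m\choose i}{0,n\choose k-i}$ (imported from \cite[Proposition 11]{mb}) and interprets each summand: ${m\choose i}$ counts the $\{1,2\}$-words of length $m$ with $i$ twos, and ${0,n\choose k-i}$ counts, by Proposition \ref{pr1}, the ternary words of length $n$ with $k-i$ twos, so the product counts the concatenations and the sum over $i$ gives the total. Your argument is more self-contained, since it does not appeal to an external identity and recovers the $m=0$ case of Proposition \ref{pr1} as a special instance; the paper's argument is more modular, trading a direct bijection for a known decomposition plus the already-established $m=0$ case. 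Either way the underlying correspondence is the same: the first $m$ letters record $S\cap Y$ over $\{1,2\}$ and the last $n$ letters record $S\cap X_i$ over $\{0,1,2\}$.
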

\begin{proof} The following formula  is proved in~\cite[Proposition 11]{mb}:
\begin{equation*}{m,n\choose k}=\sum_{i=0}^m{m\choose i}{0,n\choose k-i}.
\end{equation*}
Take $i\in\{0,1,\ldots,m\}$. According to Proposition \ref{pr1}, ${0,n\choose
k-i}$ is the number of ternary words of length $n$ having $k-i$ letters equal to $2$.
On the other hand, ${m\choose i}$ is the number of words of length $m$ over $\{1,2\}$
having $i$ letters equal to $2$. Hence, ${m\choose i}{0,n\choose k-i}$ is the number of
ternary words of length $m+n$, having $k$ letters equal to $2$, and beginning with a subword of length $m$ over $\{1,2\}$. Summing over all $i$, we obtain the assertion.
\end{proof}

\begin{example}
 \begin{enumerate}
\item For $m=1,n=3$, and $k=2$, we have ${1,3\choose 2}=18$. The corresponding words counted by this number are
\begin{align*}1022,1122,1202,1212,1220,1221,2200,2211,2210,\\
2201,2020,2121,2021,2120,2002,2112,2012,2102.
\end{align*}
\item For $m=1,n=3$, and $k=3$, we have ${1,3\choose 3}=7$. The corresponding
words are
\begin{equation*}1222,2122,2022,2212,2202,2221,2220.
\end{equation*}
 \item For $m=2,n=3$, and $k=4$, we have ${2,3\choose 4}=8$. The corresponding
 numbers are
\begin{equation*}12222,21222,22122,22212,22221,22220,22202,22022.
\end{equation*}
\end{enumerate}
\end{example}
We derive a  property of the function ${m,n\choose k}$ by introducing  a new parameter $p$.
\begin{identity}\label{pr11} If $0\leq p\leq n$, then
\begin{equation}\label{eq3}{m,n\choose
k}=\sum_{i=0}^p{p\choose i}{m+i,n-p\choose k}.
\end{equation}
\end{identity}
\begin{proof}
The result may be proved  by induction on $p$, using Equation (\ref{eq1}).
We add a combinatorial proof.

For $0\leq i\leq p\leq n$, we put $p-i$ zeros  in some of the last $p$ places in a word. This may be done in ${p\choose i}$ ways.
Now, we have to choose the remaining  $m+n+i-p$ letters. In other words, we have to count the ternary words of length $m+n-p+i$ satisfying the following conditions:
\begin{enumerate}
\item A word must have $k$ letters equal to $2$.
\item Zeros may stand in the positions $m+1,m+2,\ldots,m+n-p$.
\end{enumerate}
It is clear that we have ${m+i,n-p\choose k}$ such words. Summing over $i$ from $0$ to $p$, we obtain the equation.
  \end{proof}
In particular, taking $p=n$, we obtain
\begin{corollary} The following formula holds: \begin{equation}\label{ee1}{m,n\choose
k}=\sum_{i=0}^n{n\choose i}{m+i\choose k}.
\end{equation}
 \end{corollary}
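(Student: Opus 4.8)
The plan is to obtain \eqref{ee1} as the special case $p=n$ of Identity~\ref{pr11}. Setting $p=n$ in \eqref{eq3} gives
\begin{equation*}
{m,n\choose k}=\sum_{i=0}^n{n\choose i}{m+i,0\choose k},
\end{equation*}
so everything reduces to identifying the degenerate inset ${m+i,0\choose k}$ with the ordinary binomial coefficient ${m+i\choose k}$.

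For that identification I would invoke any one of three facts already available in the text: (i) the Remark noting that for $n=0$ the array $A$ is the standard Pascal triangle; (ii) Proposition~\ref{pr2} with $n=0$, which says ${m+i,0\choose k}$ counts ternary words of length $m+i$ with $k$ letters equal to $2$ and no $0$ among the first $m+i$ letters, i.e.\ words over $\{1,2\}$ of length $m+i$ with exactly $k$ twos, of which there are ${m+i\choose k}$; or (iii) a one-line check from \eqref{e1}, where the sum collapses to the single term $i=0$ and yields ${m+i\choose k}$ after replacing $m$ by $m+i$. Substituting back produces \eqref{ee1}.

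Alternatively, I would give a direct combinatorial proof bypassing \eqref{eq3} entirely. By Proposition~\ref{pr2}, ${m,n\choose k}$ counts ternary words $w$ of length $m+n$ with exactly $k$ twos and no zero in positions $1,\dots,m$. Classify such $w$ by the number $i$ of non-zero letters occurring among the last $n$ positions: the positions of the $n-i$ zeros can be chosen in ${n\choose n-i}={n\choose i}$ ways, and once these are fixed the remaining $m+i$ entries (the first $m$, together with the $i$ selected tail positions) form an arbitrary word over $\{1,2\}$ with exactly $k$ twos, which can be done in ${m+i\choose k}$ ways. Summing over $i=0,\dots,n$ yields \eqref{ee1}.

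There is essentially no serious obstacle here, since this is a corollary of an already-proved identity; the only point requiring a moment's care is the boundary behaviour — when $k>m+i$ the term ${m+i\choose k}$ must vanish, which it does by the usual convention, so no range restrictions beyond $k\le m+n$ are needed.
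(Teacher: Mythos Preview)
Your proposal is correct and follows exactly the paper's approach: the Corollary is stated immediately after Identity~\ref{pr11} with the one-line justification ``In particular, taking $p=n$, we obtain\ldots'', which is precisely your first argument (the paper silently uses the Remark that the $n=0$ slice of $A$ is Pascal's triangle to identify ${m+i,0\choose k}$ with ${m+i\choose k}$). Your alternative direct combinatorial count is a pleasant bonus but not needed.
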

We derive combinatorially one more formula in which ${m,n\choose k}$ is expressed  as a convolution of  binomial coefficients.
\begin{proposition}
The following formula holds:
\begin{equation*}{m,n\choose k}=\sum_{i=0}^n\sum_{j=0}^i{n\choose i}{i\choose j}{m\choose k-i+j}.
\end{equation*}
\end{proposition}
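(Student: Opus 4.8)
The plan is to argue combinatorially, using the interpretation of ${m,n\choose k}$ furnished by Proposition \ref{pr2}: it counts the ternary words $w=w_1w_2\cdots w_{m+n}$ that have exactly $k$ letters equal to $2$ and satisfy $w_1,\dots,w_m\in\{1,2\}$. I would classify each such word according to three data and show that prescribing these data recovers every admissible word exactly once, the number of possibilities being precisely the summand ${n\choose i}{i\choose j}{m\choose k-i+j}$.

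First I would let $i$ be the number of indices $t$ with $m<t\le m+n$ and $w_t\neq 0$. The set of these indices can be chosen in ${n\choose i}$ ways, and the remaining $n-i$ tail positions are then forced to equal $0$. Next, among those $i$ non-zero tail positions, let $j$ be the number carrying the letter $1$; the corresponding set of positions is chosen in ${i\choose j}$ ways, and the other $i-j$ non-zero tail positions must carry the letter $2$. At this point the block $w_{m+1}\cdots w_{m+n}$ is completely determined by the two choices and it contributes exactly $i-j$ letters equal to $2$.

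It remains to fill the first $m$ positions, all of which lie in $\{1,2\}$, so that the entire word has $k$ twos. Since $i-j$ twos already occur in the tail, one must place exactly $k-(i-j)=k-i+j$ twos among the first $m$ positions, which can be done in ${m\choose k-i+j}$ ways; when $k-i+j<0$ or $k-i+j>m$ this binomial coefficient is $0$, consistently with the fact that no such word exists. Multiplying the three counts and summing over all pairs with $0\le j\le i\le n$ yields the claimed formula, provided one checks that the assignment sending a word $w$ to the triple (set of non-zero tail positions, set of tail positions with a $1$, set of first-$m$ positions with a $2$) is a bijection onto the triples enumerated by the summand — which is clear, since these three data reconstruct $w$ unambiguously.

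I do not anticipate a real obstacle; the only point requiring care is the bookkeeping of ranges, in particular the convention that binomial coefficients vanish outside their natural range, which makes the double sum correct with no extra case analysis. As an alternative one can dispense with the combinatorics entirely: starting from Equation (\ref{ee1}), expand ${m+i\choose k}=\sum_{j=0}^{i}{i\choose j}{m\choose k-i+j}$ by the Vandermonde convolution (after the substitution $j\mapsto i-j$), which produces the identity algebraically.
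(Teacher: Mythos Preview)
Your proof is correct and follows essentially the same combinatorial argument as the paper: choose the $n-i$ zero positions among the last $n$ letters, then among the remaining $i$ tail positions choose the $j$ ones (equivalently the $i-j$ twos), and finally place the $k-i+j$ twos among the first $m$ positions. Your write-up is in fact cleaner about the bijection and the vanishing-binomial conventions, and the alternative Vandermonde derivation from Equation~(\ref{ee1}) is a nice bonus.
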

\begin{proof}
We firstly designate positions of $0$ in a word.

xxxxxxxxxxxxxx
 Since zeros may be only some of the last $n$ letters in a word, we choose $n-i$ zeros for $i=0,1,\ldots n$. This may be done in ${n\choose i}$ ways.

 For the remaining $i$ of the last $n$ letters in a word, we firstly choose $i-j$ letters equal to $2$, which may be done in ${i\choose i-j }$ ways. The remaining $j$ letters are equal to $1$.

 Furthermore, for fixed $i$ and $j$, among the first $m$ letters in a word, we have to choose $k-i+j$ letters equal to $2$, and the remaining letters must be equal to $1$. This may be done in ${m\choose k-i+j}$ ways. Summing over all $i$ and $j$, we obtain the result.
\end{proof}
\begin{corollary}
    B. Braun, W. K. Hough~\cite{wkh} showed that  terms  of the array $\{{2,n\choose k}: n=0,1,\ldots;k=0,1,\ldots,n+2\}$ count cells in the  cellular complex $X^m_n$ defined in \cite[Definition 3.4.]{wkh}.
 More precisely, let  $C^d_n$ denote the number of $d$-dimensional cells in $X^2_n$.
It is shown in \cite[Proposition 4.6.]{wkh} that the following formula holds:
\begin{equation*}C^d_n={2,n-d+2\choose 3d-2n}.\end{equation*}
\end{corollary}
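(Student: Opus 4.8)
The plan is to derive the identity $C^d_n = \binom{2,\,n-d+2}{3d-2n}$ purely as a consequence of results already established in the paper, treating the combinatorial content of \cite{wkh} as a black box. By \cite[Proposition 4.6.]{wkh}, the numbers $C^d_n$ count $d$-dimensional cells in the complex $X^2_n$, and an explicit formula for them is given there; the task is to recognize the right-hand side of that formula as an inset. Concretely, I would take whatever closed form \cite[Proposition 4.6.]{wkh} provides for $C^d_n$ — a sum of products of binomial coefficients — and match it against Equation~(\ref{e2}) specialized to $m=2$, namely
\begin{equation*}
{2,N\choose K}=2^{N-K}\sum_{i=0}^{2}2^i{2\choose i}{N\choose K-i}
=2^{N-K}\left[{N\choose K}+4{N\choose K-1}+4{N\choose K-2}\right],
\end{equation*}
with the substitution $N=n-d+2$ and $K=3d-2n$.

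The key steps, in order, are: (i) quote the formula for $C^d_n$ from \cite[Proposition 4.6.]{wkh} verbatim; (ii) perform the change of variables $N = n-d+2$, $K = 3d-2n$ and simplify the binomial arguments (note $N - K = (n-d+2)-(3d-2n) = 3n-4d+2$, and $N+K = 2d-n+2$, so the combinatorial ranges $0\le K\le N+2$ translate into the natural range of $d$ for which $X^2_n$ has $d$-cells); (iii) invoke the combinatorial interpretation of Proposition~\ref{pr2}, which says ${2,N\choose K}$ counts ternary words of length $N+2$ with $K$ twos and no zero among the first two letters, and exhibit a bijection between such words and the $d$-cells of $X^2_n$ as described in \cite[Definition 3.4.]{wkh}; alternatively (iv) if a direct bijection is cumbersome, simply verify the algebraic identity between the \cite{wkh} formula and the specialization of (\ref{e2}) above, which is a routine manipulation of binomial coefficients.

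I expect the main obstacle to be matching conventions: the indexing of cells by dimension $d$ in \cite{wkh} versus the $(m,n,k)$-indexing here, and making sure the ranges align (for instance, that $3d-2n\ge 0$ and $d\le n$ exactly when the corresponding inset is nonzero). A secondary subtlety is that $X^2_n$ in \cite{wkh} and the ``$X^m_n$'' notation in the corollary statement must refer to the same object with $m=2$; I would state this identification explicitly at the outset. Once the dictionary is fixed, the proof reduces to either a one-line appeal to Proposition~\ref{pr2} together with the bijection of \cite{wkh}, or a short binomial computation using (\ref{e2}); no genuinely hard step remains beyond bookkeeping.
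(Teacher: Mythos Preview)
The paper gives no proof of this corollary whatsoever: it is stated purely as a citation, with the content ``It is shown in \cite[Proposition 4.6.]{wkh} that \ldots'' serving as the entire justification. The identification of the Braun--Hough cell count with the inset $\binom{2,\,n-d+2}{3d-2n}$ is presented as an observation, not derived.

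Your proposal therefore goes well beyond what the paper does. The outline you give --- quoting the closed form from \cite{wkh}, substituting $N=n-d+2$, $K=3d-2n$, and matching against the $m=2$ specialization of Equation~(\ref{e2}), or alternatively building a bijection through Proposition~\ref{pr2} --- is a perfectly reasonable way to \emph{verify} the connection, and nothing in it is wrong. But it is not what the paper does, and it cannot be completed from this paper alone: step~(i) requires the explicit formula from \cite[Proposition 4.6.]{wkh}, which is external. If your aim is to match the paper, simply cite \cite{wkh} and stop; if your aim is an independent check, your plan is sound modulo having \cite{wkh} in hand.
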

\begin{corollary}
In Hetyei's paper \cite[Definition 2.1]{ht}, the numbers on the form ${m,n\choose m},(m=0,1,\ldots;n=0,1,\ldots)$ are called the asymmetric Delannoy numbers.

We note that rows of this array are diagonals of A049600 in~Sloane\cite{slo}.
\end{corollary}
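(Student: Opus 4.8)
The statement packages two assertions that need checking: first, that ${m,n\choose m}$ is exactly the asymmetric Delannoy number $a(m,n)$ of \cite[Definition 2.1]{ht}; second, that when these numbers are displayed as a two-dimensional array, its rows coincide with the diagonals of the array A049600 in \cite{slo}. The plan is to derive the first assertion from the combinatorial model already available in Proposition \ref{pr2}, and then obtain the second by reconciling indexing conventions, so that this statement genuinely is a corollary of the ternary-word interpretation.

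For the first assertion I would specialize Proposition \ref{pr2} to $k=m$: the number ${m,n\choose m}$ counts ternary words of length $m+n$ with exactly $m$ letters equal to $2$ and no letter equal to $0$ among the first $m$ positions. Hetyei's $a(m,n)$ is defined as a count of Delannoy-type lattice paths subject to a one-sided restriction, so the natural step is to encode such a word as a lattice path, reading it left to right and translating the letters $1$, $2$, $0$ into the east, diagonal and north steps in whatever assignment matches \cite[Definition 2.1]{ht}. Under this encoding, "exactly $m$ twos" fixes the relevant endpoint data and "no $0$ among the first $m$ letters" becomes precisely Hetyei's region restriction, so the encoding is the desired bijection. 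As a cross-check I would record the closed form obtained by putting $k=m$ in Equation (\ref{e2}), namely ${m,n\choose m}=\sum_{j\ge 0}\binom{m}{j}\binom{n}{j}2^{\,n-j}$, and verify that it agrees with Hetyei's formula; and if the bijection turns out clumsy to state, a fallback is to check that ${m,n\choose m}$ satisfies the defining recurrence and boundary conditions of $a(m,n)$ by specializing the recurrences (\ref{eq1}), (\ref{eq2}), (\ref{fm1k}) together with Proposition \ref{pp1}, since a recurrence plus its boundary values determines the array uniquely.

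For the second assertion I would recall the description of A049600 from \cite{slo} — it is the asymmetric Delannoy array written out by antidiagonals — and observe that its entrywise description (a closed form, or the recurrence above) matches that of $a(m,n)$ after the change of variables that "reading by antidiagonals" entails. Under that change of variables, fixing the first index $m$ and letting $n$ vary sweeps out a single diagonal of A049600, which gives the claim. Two numerical checks reassure that the lines sit where expected: the row $m=0$ is the sequence of powers of $2$ (consistent with Proposition \ref{pp1}), and the main diagonal ${n,n\choose n}$ is the central Delannoy sequence $1,3,13,63,\dots$ (consistent with the Remark after Equation (\ref{e2})).

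The step I expect to be the real obstacle is not mathematical: it is keeping the three indexing conventions straight — Hetyei's in \cite{ht}, the OEIS orientation of A049600 in \cite{slo}, and the $(m,n,k)$ convention of the present paper — so that "row" and "diagonal" align without an off-by-one or a transposition. The only substantive combinatorial point is the bijection in the first assertion, and even there the work amounts to choosing the step-encoding so that the first-$m$-letters constraint becomes Hetyei's one-sided restriction; once that is fixed, the verification is routine.
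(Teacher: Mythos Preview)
The paper gives no proof at all for this corollary; it is stated as a bare observation, essentially a literature cross-reference (to Hetyei's definition and to the OEIS entry) following the combinatorial interpretation of Proposition~\ref{pr2}. So there is nothing to compare your argument against on the paper's side.

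Your plan is sound and considerably more careful than the paper. Specializing Proposition~\ref{pr2} to $k=m$ and then matching the resulting ternary-word model to Hetyei's lattice-path definition via a step-by-step encoding is the right idea, and your fallback of checking the closed form $\sum_{j\ge 0}\binom{m}{j}\binom{n}{j}2^{\,n-j}$ obtained from~(\ref{e2}) (after the substitution $j=m-i$) against Hetyei's formula, or checking recurrence plus boundary data, is exactly what one would do to make the identification airtight. Your numerical sanity checks ($m=0$ gives $2^n$, the main diagonal gives the central Delannoy numbers) are correct. The only caveat is the one you already flag: the content of the second assertion is entirely about indexing conventions in A049600, so it is a bookkeeping verification rather than a mathematical one, and the paper simply asserts it.
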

 In our previous paper~\cite{mj}, we defined  a class of polynomials which generalize the Tchebychev polynomials. We state a particular result concerning  our present investigations.

 For a fixed nonnegative integer $m$ and nonnegative integers
$n,k$ such that $0\leq k\leq n,0\leq m\leq\frac{n+k}{2}$, we defined polynomials
$P_{m,n}(x)=\sum_{k=0}^nc_m(n,k)x^k$ such that
\begin{equation}\label{mcon}c_m(n,k)=(-1)^{\frac{n-k}{2}}{m,\frac{n+k}{2}-m\choose
\frac{n-k}{2}}, \end{equation} if $n$ and $k$ are of the same parity and
$c_m(n,k)=0$ otherwise. Using Proposition 2, we obtain the following combinatorial interpretation of the coefficients of the polynomials $P_{m,n}(x)$.

\begin{example}\label{co1}Assume that $n$ and $k$ are of the same parity.  Let
$c_m(n,k)$ denote  the coefficient of $x^k$ of the polynomial $P_{m,n}(x)$.  Then
$\vert c_m(n,k)\vert$  equals the number of ternary words of length $\frac{n+k}{2}$
 having $\frac{n-k}{2}$ letters equal to $2$ and no zeros among the first $m$
 letters in a word.
\end{example}
 \begin{remark}In this way we give a new combinatorial
interpretation of the coefficients of polynomials which appear in~Sloane\cite{slo}
A136388, A136389, A136390, A136397, and
A136398.
 \end{remark}

\section{Three generating functions}
I this part, we derive three generating functions assuming that two of parameters $m,n,k$ are constant. Firstly, we prove an identity.
\begin{proposition}For $m+k\geq n$, we have
\begin{equation}\label{p14}
{m+k-n,n\choose k}=\sum_{i=0}^m{n\choose m-i}{k+i\choose k}.
\end{equation}
\end{proposition}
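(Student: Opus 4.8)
The plan is to prove (\ref{p14}) combinatorially, using the description of insets as restricted ternary words from Proposition \ref{pr2}, with a quick algebraic cross-check via Corollary (\ref{ee1}). Put $m'=m+k-n$; the hypothesis $m+k\geq n$ gives $m'\geq 0$, and trivially $k\leq m'+n=m+k$, so Proposition \ref{pr2} applies: ${m+k-n,n\choose k}$ is the number of ternary words of length $m+k$ that have exactly $k$ letters equal to $2$ and whose first $m+k-n$ letters are all different from $0$; equivalently, all the zeros of such a word (if any) occur among its last $n$ letters.

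Next I would sort these words by their number of zeros. If a word has $z$ zeros, then it has $m+k-z$ non-zero letters, and since exactly $k$ of them are $2$'s the remaining $m-z$ of them are $1$'s; hence $z$ can only run over $0,1,\dots,m$. Write $z=m-i$ with $0\leq i\leq m$. The $m-i$ zeros must be chosen among the $n$ admissible final positions, which can be done in ${n\choose m-i}$ ways (this correctly contributes $0$ when $m-i>n$). Once the zeros are placed, exactly $(m+k-n)+\bigl(n-(m-i)\bigr)=k+i$ positions remain, each to be filled with a $1$ or a $2$ and exactly $k$ of them by a $2$; there are ${k+i\choose k}$ such fillings. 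Since every admissible word arises exactly once this way, summing over $i=0,\dots,m$ yields the right-hand side of (\ref{p14}).

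The only point that really needs care — and hence the step I would double-check most carefully — is the position bookkeeping: that after deleting the $m-i$ chosen zeros from among the last $n$ positions there remain precisely $k+i$ positions to be labelled over $\{1,2\}$, and that the correspondence between a word and a pair (choice of zero-set, choice of two-set) is a genuine bijection. As an independent check I would run the algebraic route: substituting $m\mapsto m+k-n$ in (\ref{ee1}) and reindexing $j=n-i$ gives ${m+k-n,n\choose k}=\sum_{j=0}^{n}{n\choose j}{m+k-j\choose k}$; the terms with $j>m$ vanish because then $0\leq m+k-j<k$ forces ${m+k-j\choose k}=0$, collapsing the sum to $\sum_{j=0}^{m}{n\choose j}{m+k-j\choose k}$, which is (\ref{p14}) after the substitution $j=m-i$.
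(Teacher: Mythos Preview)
Your proof is correct and follows essentially the same route as the paper: interpret ${m+k-n,n\choose k}$ via Proposition~\ref{pr2} as ternary words of length $m+k$ with $k$ twos and all zeros confined to the last $n$ positions, then classify by the number $m-i$ of zeros to obtain the factors ${n\choose m-i}{k+i\choose k}$. Your additional algebraic cross-check via (\ref{ee1}) is a nice bonus not present in the paper's proof.
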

\begin{proof} We have to count ternary words of length $m+k$, having $k$ letters
equal to $2$, and zeros may  appear  only  among last $n$ letters in a word.
 It is clear that the maximal number of zeros is $m$. Hence, for $0\leq i \leq m$, we choose $m-i$ zeros, which
may be done in ${n\choose m-i}$ ways. On the remaining $k+i$
places, we choose $k$ twos, which may be done in ${k+i\choose k}$ ways. The remaining letters are equal to $1$. Summing over all $i$, we obtain the formula.
\end{proof}
We next prove three formulas concerning  ordinary generating functions for  insets,
when two of parameters $m,n,k$ are constant.
\begin{proposition}
If  $m+k\geq n$, then the following expansion holds.
\begin{equation}\label{mkn}\frac{(1+x)^n}{(1-x)^{k+1}}=\sum_{m=m_0}^\infty{m+k-n,n\choose
k}x^m,
\end{equation}
where $m_0=\max\{0,n-k\}$.
\end{proposition}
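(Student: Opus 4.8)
The plan is to read the right-hand side of $(\ref{mkn})$ as a Cauchy product of two familiar series and then to fold the convolution back up by means of the identity $(\ref{p14})$ proved just above. First I would record the two expansions
\[
(1+x)^{n}=\sum_{j=0}^{n}{n\choose j}x^{j},\qquad
\frac{1}{(1-x)^{k+1}}=\sum_{i=0}^{\infty}{k+i\choose k}x^{i},
\]
the latter being the standard negative-binomial series. Multiplying them and collecting powers of $x$ gives
\[
\frac{(1+x)^{n}}{(1-x)^{k+1}}=\sum_{m=0}^{\infty}\left(\sum_{i=0}^{m}{n\choose m-i}{k+i\choose k}\right)x^{m},
\]
where ${n\choose m-i}$ is understood to vanish for $m-i>n$, so that each inner sum is a genuine finite convolution.

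Next I would invoke $(\ref{p14})$. For every $m$ with $m\ge m_{0}=\max\{0,n-k\}$ we have $m+k\ge n$, which is exactly the hypothesis of $(\ref{p14})$, and that formula says the inner convolution equals ${m+k-n,n\choose k}$. Reading off the coefficient of $x^{m}$ on both sides of the displayed identity therefore identifies the coefficient of $x^{m}$ in $\frac{(1+x)^{n}}{(1-x)^{k+1}}$ with ${m+k-n,n\choose k}$ for each $m\ge m_{0}$, which is $(\ref{mkn})$.

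I expect the only point requiring care to be the bookkeeping of the lower summation limit: since $(\ref{p14})$ is valid precisely when $m+k\ge n$, one must start the series in $(\ref{mkn})$ at $m_{0}$ and not earlier, and this is exactly why $m_{0}=\max\{0,n-k\}$ appears in the statement. No analytic subtlety is involved: the whole argument reduces to a formal power-series identity obtained from the product of two known expansions. As an alternative one could bypass $(\ref{p14})$ and prove $(\ref{mkn})$ directly by induction, using the recurrences $(\ref{eq1})$ and $(\ref{fm1k})$ together with the observation that $(1-x)$ times the left-hand side is of the same shape with $k$ lowered by one; but the Cauchy-product route is shorter and more transparent.
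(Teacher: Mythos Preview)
Your proof is correct and follows essentially the same route as the paper: expand $(1+x)^n$ and $(1-x)^{-(k+1)}$ separately, take the Cauchy product, and identify the resulting convolution with ${m+k-n,n\choose k}$ via identity~(\ref{p14}). The only minor difference is that the paper disposes of the case $k\ge n$ by citing an earlier result and then carries out the Cauchy-product computation only for $n>k$, whereas you treat both cases uniformly; your version is in fact a bit cleaner in this respect.
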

\begin{proof}
In the case $k\geq n$, the formula is a particular case of \cite[Theorem 1]{mm}.
So, we consider the case $n>k$ and $m\geq n-k$. It is well-known fact that
\begin{equation*}
\frac{1}{(1-x)^{k+1}}=\sum_{i=0}^\infty{k+i\choose i}x^i.
\end{equation*}
Multiplying by $(1+x)^n$, we obtain
\begin{equation}\label{exx}
\frac{(1+x)^n}{(1-x)^{k+1}}=\sum_{i=0}^\infty\sum_{j=0}^n{k+i\choose k}{n\choose
j}x^{i+j}.
\end{equation}
We calculate the coefficient $a_{m+k-n}$ of $x^{m+k-n}$ on the right-hand side of
this equation. It is obtained for $i=0,1,2,\ldots,m+k-n;j=m+k-n-i$
and is equal to
\begin{equation*}a_{m+k-n}=\sum_{i=0}^p{n\choose p-i}{k+i\choose i},
\end{equation*}
where $p=m+k-n$. Applying (\ref{p14}), we obtain that $a_{m+k-n}={m+k-n,n\choose
k}$.
\end{proof}
The preceding generating function is with respect to the parameter $m$. The following is with respect to $n$.
\begin{proposition} For $n+k\geq m$, we have
\begin{equation*}
\frac{(1-x)^m}{(1-2x)^{k+1}}=\sum_{n=0}^\infty{m,n+k-m\choose k}x^n.
\end{equation*}
\end{proposition}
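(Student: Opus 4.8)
The plan is to mimic the structure of the preceding proposition, using the combinatorial interpretation from Proposition \ref{pr2} together with an explicit power-series expansion and a binomial identity to match coefficients. First I would fix $n$ and $k$ with $n+k\geq m$ and expand the left-hand side. Writing $\frac{1}{(1-2x)^{k+1}}=\sum_{i=0}^\infty\binom{k+i}{k}2^i x^i$ and $(1-x)^m=\sum_{j=0}^m(-1)^j\binom{m}{j}x^j$, the product becomes a double sum $\sum_{i,j}(-1)^j\binom{m}{j}\binom{k+i}{k}2^i x^{i+j}$. Extracting the coefficient of $x^n$ (so $i+j=n$, $0\leq j\leq m$, $i=n-j\geq 0$, which holds since $n+k\geq m$ guarantees $n\geq m-k$ but more simply $n\geq 0\geq m-k$ is not automatic — one checks the admissible range of $j$ is $0\le j\le\min\{m,n\}$) gives
\begin{equation*}
a_n=\sum_{j=0}^{\min\{m,n\}}(-1)^j\binom{m}{j}\binom{k+n-j}{k}2^{\,n-j}.
\end{equation*}

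The substance of the proof is then to show $a_n=\binom{m,\,n+k-m}{k}$. Here I would invoke Equation (\ref{e2}) with the parameters of the claimed inset, namely $\binom{m,\,n+k-m}{k}=2^{(n+k-m)-k}\sum_{i=0}^m 2^i\binom{m}{i}\binom{n+k-m}{k-i}=2^{n-m}\sum_{i=0}^m 2^i\binom{m}{i}\binom{n+k-m}{k-i}$. So it remains to verify the purely binomial identity
\begin{equation*}
\sum_{j}(-1)^j\binom{m}{j}2^{n-j}\binom{k+n-j}{k}=2^{n-m}\sum_{i}2^{i}\binom{m}{i}\binom{n+k-m}{k-i},
\end{equation*}
equivalently $\sum_j(-1)^j\binom{m}{j}2^{m-j}\binom{k+n-j}{k}=\sum_i 2^i\binom{m}{i}\binom{n+k-m}{k-i}$. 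I expect this is the main obstacle. The cleanest route is generating functions in a dummy variable $y$: the left side is the coefficient of $y^n$ in $(2-y)^m\cdot\frac{1}{(1-y)^{k+1}}$ after a shift, while writing $2-y=(1-y)+1$ and applying the binomial theorem, $(2-y)^m=\sum_i\binom{m}{i}(1-y)^i$, turns $(2-y)^m/(1-y)^{k+1}$ into $\sum_i\binom{m}{i}(1-y)^{i-k-1}$; extracting $[y^n]$ yields exactly $\sum_i\binom{m}{i}\binom{k-i+n}{k-i}$, and a further reindexing $\binom{k-i+n}{k-i}=\binom{n+k-i}{n}$ matches the right-hand side after the substitution $i\mapsto m-i$ is reconciled with the form produced by (\ref{e2}). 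Alternatively, one could give a direct combinatorial argument: by Proposition \ref{pr2}, $\binom{m,n+k-m}{k}$ counts ternary words of length $n$ with $k$ twos whose first $m$ letters avoid $0$; classifying such words by the number of $0$'s among the last $n+k-m$... positions and the number of $1$'s versus $2$'s among the first $m$ positions reproduces the expansion of $(1-x)^m/(1-2x)^{k+1}$ term by term, which may in fact be the intended proof given the style of the paper.

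Finally I would record the boundary bookkeeping: one must check the lower summation limit (the series starts at $n=0$, and the condition $n+k\ge m$ ensures the claimed inset $\binom{m,n+k-m}{k}$ is well-defined, i.e. the second argument $n+k-m$ is nonnegative), and confirm that for small $n<m-k$ — which cannot occur under the hypothesis, but is worth a sentence — no spurious terms appear. Assembling the coefficient computation, the identity via the $(2-y)=(1-y)+1$ trick, and the appeal to (\ref{e2}) completes the argument.
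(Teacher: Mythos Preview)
Your overall strategy---expand $(1-x)^m/(1-2x)^{k+1}$ as a Cauchy product and match the coefficient of $x^n$ to $\binom{m,\,n+k-m}{k}$---is exactly what the paper does, and your coefficient
\[
a_n=\sum_{j}(-1)^j\binom{m}{j}\,2^{n-j}\binom{k+n-j}{k}
\]
is correct. The divergence is in how the identity $a_n=\binom{m,\,n+k-m}{k}$ is established. The paper invokes Equation~(\ref{ee}): applying it with $p=m$ to $\binom{m,\,n+k-m}{k}$ immediately yields
\[
\binom{m,\,n+k-m}{k}=\sum_{i=0}^m(-1)^i\binom{m}{i}\binom{0,\,n+k-i}{k}
=\sum_{i=0}^m(-1)^i\binom{m}{i}\,2^{n-i}\binom{n+k-i}{k},
\]
which is $a_n$ on the nose. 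No further manipulation is needed.

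Your route through Equation~(\ref{e2}) has a genuine gap at the final step. The $(2-y)=(1-y)+1$ trick correctly gives
\[
[y^n]\,\frac{(2-y)^m}{(1-y)^{k+1}}=\sum_{i}\binom{m}{i}\binom{n+k-i}{n},
\]
but this is \emph{not} carried to the (\ref{e2}) form $\sum_i 2^{i}\binom{m}{i}\binom{n+k-m}{k-i}$ by any reindexing: after $i\mapsto m-i$ you get $\sum_i\binom{m}{i}\binom{n+k-m+i}{n}$, which has no $2^i$ and a different second binomial. The two expressions are equal, but proving it is essentially as hard as the original claim (one route: recognize $\sum_i\binom{m}{i}\binom{n+k-m+i}{n}=\binom{n+k-m,\,m}{n}$ via (\ref{ee1}), then relate $\binom{n+k-m,\,m}{n}$ to $2^{m-n}\binom{m,\,n+k-m}{k}$ by applying (\ref{e2}) to both and reindexing---several steps, not one). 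So the ``further reindexing\ldots reconciled'' sentence is where the argument breaks; replacing the appeal to (\ref{e2}) by a direct appeal to (\ref{ee}) closes the gap in one line.
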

\begin{proof} Similarly to the proof of the preceding proposition, we reduce the proof to the following identity:
\begin{equation*}
{m,n+k-m\choose k}=\sum_{i=0}^n(-1)^i\cdot 2^{n-i}\cdot{m\choose i}\cdot{k+n-i\choose k},
\end{equation*}
which  easily follows from (\ref{ee}).
\end{proof}
Finally, we derive a generating function with respect to parameter $k$.
\begin{proposition} We have
\begin{equation*}
\frac{(2-x)^n}{(1-x)^{m+n+1}}=\sum_{k=0}^\infty{m+k,n\choose k}x^k.
\end{equation*}
\end{proposition}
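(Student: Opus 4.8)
The plan is to mirror the structure of the two preceding propositions: reduce the claimed generating-function identity to a single binomial identity, which in turn follows from one of the explicit formulas already established for $\binom{m,n}{k}$. The natural candidate is Equation (\ref{ee1}), namely $\binom{m,n}{k}=\sum_{i=0}^n\binom{n}{i}\binom{m+i}{k}$, since here $n$ is held fixed and the variable $m$ in that formula plays the role of our fixed $m$ while $k$ is the summation index of the generating function. First I would expand the right-hand side of the target identity using the two standard power-series facts $\frac{1}{(1-x)^{m+n+1}}=\sum_{j\ge 0}\binom{m+n+j}{j}x^j$ and $(2-x)^n=\sum_{r=0}^n\binom{n}{r}2^{n-r}(-x)^r$, so that
\begin{equation*}
\frac{(2-x)^n}{(1-x)^{m+n+1}}=\sum_{r=0}^n\sum_{j=0}^\infty (-1)^r 2^{n-r}\binom{n}{r}\binom{m+n+j}{j}x^{r+j}.
\end{equation*}
Collecting the coefficient of $x^k$ (with $j=k-r$, $0\le r\le \min\{n,k\}$) gives
\begin{equation*}
[x^k]\frac{(2-x)^n}{(1-x)^{m+n+1}}=\sum_{r=0}^{\min\{n,k\}}(-1)^r 2^{n-r}\binom{n}{r}\binom{m+n+k-r}{m+n}.
\end{equation*}

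So the task reduces to proving the identity
\begin{equation*}
\binom{m+k,n}{k}=\sum_{r=0}^{\min\{n,k\}}(-1)^r 2^{n-r}\binom{n}{r}\binom{m+n+k-r}{m+n}.
\end{equation*}
I would establish this by substituting $m\mapsto m+k$ and $k\mapsto k$ into (\ref{ee1}): $\binom{m+k,n}{k}=\sum_{i=0}^n\binom{n}{i}\binom{m+k+i}{k}$, and then showing the two expressions agree. Equivalently — and this looks cleaner — one can evaluate $\sum_{i=0}^n\binom{n}{i}\binom{m+k+i}{k}$ directly as a coefficient extraction: $\binom{m+k+i}{k}=[x^k]\,(1-x)^{-(m+i+1)}$, hence $\sum_i\binom{n}{i}\binom{m+k+i}{k}=[x^k]\,(1-x)^{-(m+1)}\sum_i\binom{n}{i}(1-x)^{-i}=[x^k]\,(1-x)^{-(m+1)}\bigl(1+\tfrac{1}{1-x}\bigr)^n=[x^k]\,(1-x)^{-(m+1)}\cdot\frac{(2-x)^n}{(1-x)^n}=[x^k]\,\frac{(2-x)^n}{(1-x)^{m+n+1}}$, which is exactly the generating function claimed. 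In fact this computation is self-contained enough that I might present it as the whole proof, invoking (\ref{ee1}) only to identify the sum with $\binom{m+k,n}{k}$.

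The main obstacle is bookkeeping rather than conceptual: one must check that no convergence or range issue spoils the coefficient extraction, in particular that the hypothesis $n+k\ge m$ stated in the neighbouring proposition is not secretly needed here (the target proposition as stated carries no such restriction, and indeed the formal power series manipulation is valid for all $m,n\ge 0$, with $\binom{m+k,n}{k}$ interpreted as $0$ once $k>m+n$ — one should confirm the right-hand side vanishes in that range, which it does since $[x^k]$ of a rational function with the stated pole structure gives a polynomial in $k$ that must coincide with the combinatorially-zero values). A secondary point to verify carefully is the algebraic step $1+\frac{1}{1-x}=\frac{2-x}{1-x}$ and that raising to the $n$-th power distributes correctly through the finite binomial sum; this is routine but is the one place a sign or exponent slip would propagate. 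If one instead prefers to avoid generating-function juggling, the same binomial identity can be proved by induction on $n$ using the recurrence (\ref{eq2}) together with the Pascal recurrence (\ref{fm1k}), but the coefficient-extraction route above is shorter and matches the style of the two preceding proofs.
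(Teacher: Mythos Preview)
Your ``cleaner'' route---starting from (\ref{ee1}) with $m\mapsto m+k$, writing $\binom{m+k+i}{k}=[x^k](1-x)^{-(m+i+1)}$, summing over $i$ via $\sum_i\binom{n}{i}(1-x)^{-i}=\bigl(1+\tfrac{1}{1-x}\bigr)^n=\bigl(\tfrac{2-x}{1-x}\bigr)^n$, and reading off the generating function---is exactly the paper's proof, just run in the opposite direction. One small slip in your edge-case discussion: $\binom{m+k,n}{k}$ is \emph{never} zero for $k\ge 0$, since the relevant inequality is $k\le (m+k)+n$, not $k\le m+n$; so there is nothing to check there.
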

\begin{proof}
We have
\begin{equation*}
\left(\frac{2-x}{1-x}\right)^n=\left(1+\frac{1}{1-x}\right)^n=\sum_{i=0}^n{n\choose i}\frac{1}{(1-x)^i}.
\end{equation*}
It follows that
\begin{equation*}
\frac{(2-x)^n}{(1-x)^{m+n+1}}=\sum_{i=0}^n{n\choose i}\frac{1}{(1-x)^{i+m+1}}=
\sum_{k=0}^\infty\sum_{i=0}^n{n\choose i}{k+i+m\choose k}x^k.
\end{equation*}
On the other hand, using  (\ref{ee1})  we obtain
\begin{equation*}
\sum_{i=0}^n{n\choose i}{k+i+m\choose k}={m+k,n\choose k}.
\end{equation*}
\end{proof}

We finish with two sets of examples. The first concerns two dimensional arrays
 in Slone~\cite{slo} generated by our function. The second  concerns sequences in Slone~\cite{slo}. Some examples are new, and some are only a new  combinatorial interpretations of our earlier results.
 \section{Examples I}
\begin{example}
It follows  from Proposition \ref{pp1} that the array $A=({m,1\choose k}),(m=0,1,\ldots;k=0,1,\ldots,m+1)$ is $(2,1)$ Pascal triangle A029653.
It is  the mirror of Lucas triangle, which is A029635.

We thus obtain the following combinatorial interpretation of entries of Lucas triangle:   The $(k,m)$ entry of Lucas  triangle equals the number of ternary words of length $m+1$ having $k$ letters equal to  $2$ and $0$ may only be the last letter of a word.
\end{example}
\begin{example}
From (\ref{e2}) follows
${0,n\choose k}=2^{n-k}{n\choose k}$.  A038207

 Hence,  the number $2^{n-k}{n\choose k}$ equals the number of ternary words of length $n$ having $k$ letters equal to $2$.

For instance, for $n=3,k=2$, we have the following $6$ words:
\begin{equation*}221,212,122,220,202,022.
\end{equation*}
\end{example}
\begin{example}
Taking $m=0$ in (\ref{mcon}), we obtain $P_{0,n}(x)=U_n(x)$, where
$U_n(x)$ is the Tchebychev polynomial of the second kind.
If $u_{n,k}$ is the coefficients of $x^k$ of
$U_n(x)$, then
$\vert u_{n,k}\vert$ equals the number of ternary words of length $\frac{n+k}{2}$
 having $\frac{n-k}{2}$ letters equal to $2$. A038207

 For instance, we have $\vert u_{4,2}\vert(=12)$, and the corresponding words are:
\begin{equation*}200,201,210,211,020,120,021,121,002,102,012,112.
 \end{equation*}
\end{example}
\begin{example}
  Taking $m=1$ in (\ref{mcon}), we obtain $P_{1,n}(x)=T_n(x)$, where $T_n(x)$ is the Tchebychev polynomial of the first kind.
Hence, if $t_{n,k}$ is the coefficients of $x^k$  of $T_n(x)$, then $\vert t_{n,k}\vert$ equals the number of ternary words of length $\frac{n+k}{2}$
 having $\frac{n-k}{2}$ letters equal to $2$ and $0$ is not the first letter of a word. seqnum{A200139}

 In particular, for $\vert t_{4,2}\vert(=8)$, we obtain the following words:
 \begin{equation*}112,121,211,120,102,210,201,200.
 \end{equation*}
  \end{example}

\begin{example}
 The array $\{{3,n\choose k}: n=0,1,\ldots;k=0,1,\ldots,n+3\}$ is in a
 way connected with array
  seqnum{A289921}.

   Namely, arrays have a number terms which are the
  same, but not all.
\end{example}

\begin{example}
In~\cite[Proposition 45]{mb}, we proved that ${m+1,n-1\choose m}$ equals the number of weak compositions of $m+n$ having $n-1$ parts equal to zero. Hence, this number equals the number of ternary words of length $m+n$ having $m$ letters equal to $2$
and the initial subword of length $m+1$ contains no zero.

In particular, we have $\{{3,n\choose 2}=2^{n-3}(n^2+11n+24),(n\geq 0)$.
seqnum{A058396}		

 The number counts weak compositions of $n+1$ having exactly $2$ parts equal to $0$. For instance, for $n=1$, we have the following $9$ weak composition of $2$ having
two zeros:
\begin{equation*}200,020,002,1100,1010,1001,0110,0101,0011.
\end{equation*}
Ternary words of length $4$ having $2$ letters equal to $2$, and the initial subword of length $3$ contains no zero are:
\begin{equation*}2211,2121,2112,1212,1221,1122,1220,2120,2210.
\end{equation*}
\end{example}

  In~\cite[Proposition 27]{mb}, the following explicit formula for
the Delannoy numbers $D(m,n)$ is derived: \begin{equation*}D(m,n)={m,n\choose n}.\end{equation*}

We have the following  combinatorial interpretation of the Dalannoy numbers.
seqnum{A008288}
\begin{example}
 The
Delannoy number $D(m,n)$ equals the number of ternary words of length $m+n$ having $n$
letters equal to $2$ and the initial subword of length $m$ contains no zero.
\end{example}
The  following example also concerns the Delannoy numbers.
\begin{example}
In~\cite[Proposition 31,1.]{mb}, we proved that the number of solutions of the Diophantine inequality
\begin{equation*}\vert x_1\vert+\vert x_2\vert+\cdots+\vert x_n\vert\leq m
\end{equation*}
equals ${m,n\choose n}(=\sum_{i=0}^n{n\choose i}{m-i+n-1\choose n-1})$. We denote
this number by $G(m),(m=0,1,\ldots)$.
\end{example}
\begin{remark}
It follows from~Conway and Sloane \cite[Eq. (3.3)]{cs} that $G(m)$ are
the christal ball numbers for the cubic lattice $\mathbb Z^n$.
\end{remark}

\begin{example}In~\cite[Proposition 31,2.]{mb}, we proved that the number of solutions of the Diophantine equality
\begin{equation*}\vert x_1\vert+\vert x_2\vert+\cdots+\vert x_n\vert=m
\end{equation*}
equals ${m-1,n\choose n-1}(=\sum_{i=0}^n{n\choose i}{m-i+n-1\choose n-1})$. We denote this number by $S(m),(m=0,1,\ldots)$.
\end{example}
Hence, $S(m)$ equals the number of ternary words of length $m+n-1$ having $n-1$ letters equal to $2$ and the initial subword of length $n$ contains no zero.

Some sequence in Sloane~\cite{slo} concerning this case are:
A005899, A008412, A008413, A008414, A008415.
\begin{remark}
It follows from~Conway and Sloane \cite[Eq. (3.2)]{cs} that $S(m),(m=0,1,\ldots)$. is the coordinating sequence for the  cubic lattice $\mathbb Z^n$.
\end{remark}
\begin{example} We now consider the assymetric Dellanoy numbers
${m,n\choose m},(n,k\geq 0)$. A049600

The assymetric Delannoy  number ${m,n\choose m}$ equals the number of ternary words of length $m+n$ havin $m$ letters equal to $2$ and
the initial subword of length $m$ contains no zero.

For $m=n=2$, we have ${2,2\choose 2}=13$, and   ternary words are:
six permutation of $1122$, three permutation of $122$ ending by $0$, three permutations of $122$ with $0$ at the next to the last place, and $2200$.
\end{example}

\begin{example}
From ~\cite[Proposition 45]{mb} follows that
  the number of composition of $m$ in which   $k$ parts are equal to zero, which equals  ${m+1,k-1\choose k}$ is the number of ternary words of length $m+k$ having $k$  letters equal to $2$ and
  the initial subword of length $m+1$ contains no zero.

  The rows of array $\{{m+1,k-1\choose k}:m=0,1,\ldots;k=1,2,\ldots\}$
  are figurate numbers based on the $k$-dimensional regular convex polytope.

 Some sequences related to this case are: A005900, A014820, A069038, A069039, A099193.

 \end{example}
\begin{example}
We next consider  the Sulanke numbers, which we denote by $s(n,k)$ . It is proved
in~\cite[Proposition
29]{mb} that $s(n,k)={\frac{n+k}{2},\frac{n+k}{2}\choose k}$, if $n+k$ is even.
      In this case, the number $s_{n,k}$  equals the number of ternary words of length $n+k$ having
$k$ twos and the initial subword of length $\frac{n+k}{2}$ contains no zero.
A064861
\end{example}
\begin{example}
If $n+k$ is odd, the
$s(n,k)={\frac{n+k-1}{2},\frac{n+k+1}{2}\choose k}$.
Now, the number $s_{n,k}$ is the number of ternary words of length $n+k$ having $k$ twos  and the initial subword of length $\frac{n+k-1}{2}$ contains no zero.
A064861
 \end{example}

\section{Examples 2}
\begin{example}
Values of ${m,1\choose 1}$ are odd numbers $2m+1,(m=0,1,\ldots)$. seqnum{A005408}

Hence, $2m+1$ equals the number of ternary words of length $m+1$ having one letter equal to $2$ and and the initial subword of length $m$ contains no zero.

For instance, for $m=2$, five ternary words of length $3$ are $112,121,211,120,210$.
\end{example}
\begin{example}
Values of ${m,1\choose 2}$ are squares $m^2,(m=0,1,\ldots)$. A000290

Hence, $m^2$ equals the number of ternary words of length $m+1$ having two letters equal to $2$ and the initial subword of length $m$ contains no zero.

For instance, for $m=3$, nine ternary words of length $4$ are \begin{equation*}1122,1212,1221,2121,2211,2112,2210,2120,1220.\end{equation*}
\end{example}

\begin{example}
For $m\geq 2$ velues of ${m,1\choose 3}(=\frac{(m-1)m(2m-1)}{6},(m=1,2,\ldots))$ are square pyramidal numbers
A000330

Hence, the square pyramidal number $\frac{(m-1)m(2m-1)}{6}$ equals the number of ternary words of length $m+1$ having three letters equal to $2$ and the initial subword of length $m$ contains no zero.

For instance, for $m=4$, $14$ ternary words of length $5$ are:
$10$ permutations of $11222$ and $4$ permutation of $1222$ ending by $0$.
\end{example}
\begin{example}
For $m\geq 2$ values of ${m,1\choose 4}$ are four dimensional pyramidal numbers:  $\frac{(m+1)^2((m+1)^2-1)}{12}$.  A002415

Hence, $\frac{(m+1)^2((m+1)^2-1)}{12}$ equals the number of ternary words of length $m+2$ having four letters equal to $2$ and the initial subword of length $m$ contains no zero.

For instance, for $m=4$, $20$ ternary words of length $6$ are:
$15$ permutations of $112222$ and $5$ permutation of $12222$ ending by $0$.
\end{example}
\begin{example}
  We have
${m,2\choose 2}=m^2+(m+1)^2$. It is seqnum{A001844} in~\cite{slo}. Hence, the sum
$m^2+(m+1)^2$ equals the number of ternary words of length $m+2$ having $2$
letters equal to $2$ and the initial subword of length $m$ contains no zero.

 For
$m=2$ this number equals $13$. The corresponding ternary words are
\begin{equation*}2211,2121,1221,1212,1122,2112,2210,2120,1220,2201,2120,1220,2200.\end{equation*}
\end{example}		

\begin{example}
 We have  ${m,2\choose 3}=\frac{m(2m^2+1)}{3}$, which are the octahedral
numbers. A005900

 Hence,  the octahedral  number
$\frac{m(2m^2+1)}{3}$ equals the number of ternary words of length $m+2$ having
three letters equal to $2$ and the initial subword of length $m$ contains no zero.

In particular, we have
${3,2\choose 3}=19$ and the corresponding ternary words have length $5$ with three
letters equal to $2$ and zero may stand only at the last two position. The word
consists of $10$ permutation of $22211$, $8$ permutation of $2221$ ending with
either $1$ or $0$, and $22200$.
\end{example}
\begin{example}
 The sequence  ${m,3\choose 3},(m=0,1,\ldots)$ consists
    of the centered octahedral numbers $\frac{(2m+1)(2m^2+2m+3)}{3}$.
    A001845

  Hence, the centered octahedral  number
$\frac{(2m+1)(2m^2+2m+3)}{3}$ equals the number of ternary words of length $m+3$ having
three letters equal to $2$ and the initial subword of length $m$ contains no zero.

\end{example}
\begin{example}
Sequence  ${m,2\choose 4},(m=0,1,2,\ldots)$ consists of
$4$-dimensional analog of centered polygonal numbers $\frac{m(m-1)(m^2-m+1)}{6}$.
A006325		
\end{example}
 \begin{example}
  We have   ${1,n\choose 2}=n(n+3)\cdot 2^{n-3},(n=0,1,\ldots)$.  A001793

The number ${1,n\choose 2}$ equals the number of Dyck paths of semilength $n+2$ having pyramid weight equal to  $n+1$ equals. It is equal to  the number of ternary words of length $n+1$ having two letter   equal to $2$ and not beginning by $0$.
\end{example}
\begin{example}
    In~\cite[Proposition 50]{mb}, we proved that for $n>2$, the possible bishop moves on $n \times n$ chessboard equals ${1,n\choose n-2}(=\frac{2n(2n-1)(n-1)}{3})$. A002492

    Hence,  for $n>2$ the possible bishop moves on $n \times n$ chessboard equals
     the number of ternary words of length $n+1$ having $n-2$ letters equal to $2$, and    not beginning by $0$.
 \end{example}
 \begin{example}
  For $m\geq 2$, the number ${m,2\choose 5},(=\frac{m(m^4-1)}{30})$
is  the convolution of  nonzero squares with themselves, that is,
${m,2\choose 5}=\sum_{i=2}^mi^2(m-i)^2$, and  is  equal to the total area of all
square regions from an $m\times m$ grid.  A033455
\end{example}

\begin{example}
The central Delannoy number $D(n,n)$ is the number of ternary words of length $2n$
having $n$ letters equal to $2$ and $0$ can not appear in the initial subword of
length $n$. A001850
 \end{example}
\begin{example}
We derive a result concerning the Catalan numbers. A000108

 It is proved in~\cite[Proposition 25]{mb} that for  the $k$th
Catalan number $C_k$ holds
 \begin{equation*}C_k=\frac{1}{3k+2}{2k,1\choose
k}.\end{equation*}
Hence, $(3k+2)\cdot C_k$ equals the number of ternary words of
length $2k+1$ having $k$ letters equal to $2$ and $0$ may appear only as the last
letter in a word. A051960

In particular, for $k=2$ we have $(3k+2)C_2=16$. These $16$ word of length $5$ are
$10$ permutation of $11122$, and $6$ permutation of $1122$ ending by zero.
  \end{example}

\begin{example}
In this example, we express the Fibonacci numbers in terms of ternary words. A000045

 Let $m$ be a fixed positive integer. From (\ref{fbb}) follows that  $F_{m+3}$ equals the number of ternary words of length $m-i+1,(0\leq i\leq m)$  having $i$ letters equal to $2$ and $0$ may only be the last letter in a word.
In particular, $F_6=8$
and the corresponding ternary words are
\begin{equation*}1111,1110,112,121,211,210,120,22.
\end{equation*}
\end{example}
  \begin{example} The number of triangles in the Turan graph $T(m, m-2)$, for $m>3$,
 equals the number of ternary words of length $m+3$
 having $m$ letters equal to $2$ and $0$ can not appear in the initial subword of
length $m+1$.

 The number is $\{{m+1,2\choose m}$.  A000297.
\end{example}
\begin{example}
The number of points on surface of octahedron equals the number of ternary words of length $m+2$ having two letters equal to $2$ and $0$ can not appear in the initial subword of length $m-1$.

For $m>1$, the number is ${m-1,3\choose 2}(=4m^2+2)$.
A005899
\end{example}

\begin{example}
 The number of maximal and maximum cliques in the $n$-cube-connected cycles graph
 equals the number of ternary words of length $2n$ having one letter equal to $2$ and $0$ can not appear in the initial subword of
length $n$.

 The number is  $3n\cdot 2^{n-1}(={n,n\choose 1})$.
   A167667
\end{example}
 \begin{example}
 The number of peaks in all Schroeder paths  from $(0,0)$ to $(2n,0)$ equals
the number    of ternary words of length $2m+1$ having $m+1$ letters equal
   to $2$ and $0$ can not appear in the initial subword of
length $m$.
    The number is $\sum_{i=1}^{m+1}{m+1\choose i}{m+i\choose
     i-1}(={m,m+1\choose m+1})$.
    A002002
\end{example}
\begin{example}
 The number of order-preserving partial self maps of $\{1,\ldots,m\}$.
    equals the number of ternary words of length $2m+1$ having $m$ letters equal to $2$ and $0$ can not appear in the initial subword of
length $m$.

  The number is $\sum_{i=0}^{m+1}{m+1\choose i}{m+i\choose i}(={m,m+1\choose m})$.
    A002003
\end{example}
\begin{example}
  The number of Dyck paths with semilength $m+4$, and an odd number of peaks, and the central peak has height $m-2$
   equals the number of ternary words of length $m+2$ having $m$ letters equal to
   $2$ and  a word can not begin with $0$.

The number is $2(m+1)^2(={1,m+1\choose m})$.
   A001105
 \end{example}
\begin{example}
     The sum  of the  first $m+1$ even squares
   equals the number of ternary words of length $m+3$ having $m$ letters equal to
   $2$ and $0$ can not appear in the initial subword of
length $m$.

 The number is $\frac{2(m+1)(m+2)(2m+3)}{3}(={m,m+2\choose m})$.
   A002492
\end{example}
\begin{example}
 The variance of time for a random walk starting at $0$ to reach one of the
 boundaries at $+m$ or $-m$ for the first time    equals the number of ternary
 words of length $m+4$ having $m$ letters equal to  $2$ and not beginning by $0$.

  The number is $\frac{2(m+1)(m+2)^2(m+1)}{3}(={1,m+3\choose m})$.
    A072819
 \end{example}
\begin{example}
  The maximal number or regions in the plane that can be formed with $m$ hyperbolas
 equals the number of ternary words of length $m+3$ having $m+1$ letters equal to
 $2$ and $0$ can not appear in the initial subword of
length $3$.

 The number is   $2(m+1)^2+1(={3,m\choose m+1})$.
  A058331
\end{example}
\begin{example}
The number ${n+1,n-1\choose n}$ equals the number of Dyck paths having exactly $n$ peaks in level $1$ and $n$ peaks in level $2$ and no other peaks.

It is also  the number of ternary words of length $2n$ having $n-1$ letters equal to $2$ and $0$ can not appear in the initial subword of
length $n+1$. A176479
\end{example}
\begin{example}
The number ${n^2,n\choose n}$ equals the number of integer points in an
$n$-dimensional sphere of Lee-radius $n^2$ centered at the origin.
It is also the number of ternary words of length $n^2+n$  having $n$ twos and $0$ can not appear in the initial subword of length $n^2$.
  A181675.
\end{example}

\end{document}